\newcommand{\Z}{{\mathbb Z}}
\newcommand{\Q}{{\mathbb Q}}
\newcommand{\R}{{\mathbb R}}
\newcommand{\Qb}{\overline{\mathbb Q}}
\newcommand{\C}{{\mathbb C}}
\newcommand{\A}{{\mathcal A}}
\newcommand{\e}{{\varepsilon}}
\newcommand{\m}[1]{{\overline{h}(#1)}} %$m_S$ $Ah_S$
\newcommand{\h}[1]{{\overline{h}^*\!(#1)}} %$h_S$
\newcommand{\Card}[1]{{\vert #1\vert}} %$\#(S)$
\newcommand{\Gal}{\mathrm{Gal}}
\newcommand{\poubelle}[1]{}
\newcommand{\SGal}{{\overline{S}^{\Gal}}}
\theoremstyle{plain}
\newtheorem{theorem}{Theorem}[section]
\newtheorem{lemma}[theorem]{Lemma}
\newtheorem{remark}[theorem]{Remark}
\newtheorem{corollary}[theorem]{Corollary}
\begin{document}

\title[On Bilu's equidistribution theorem]{Equidistribution for sets which are not necessarily Galois stable: On a theorem of Mignotte}
\date{}
\author{Francesco Amoroso and Arnaud Plessis}
\address{Francesco Amoroso: Laboratoire de Math\'ematiques N.~Oresme, Université de Caen Normandie, CNRS UMR 6139,  BP 5186, 14032 Caen Cedex, France} 
\address{Arnaud Plessis: Academy of Mathematics and Systems Science, Morningside Center of Mathematics, Chinese Academy of Sciences, Beijing 100190, China}

\maketitle 

\begin{abstract}
An important result of Bilu deals with the equidistribution of the Galois orbits of a sequence $(\alpha_n)_n$ in $\Qb^*$. Here, we prove a quantitative  equidistribution theorem for a sequence of finite subsets in $\Qb^*$ which are not necessarily stable by Galois action. We follow a method of Mignotte.
\end{abstract}

\noindent AMS Classification: 11J68, 11G50

\section{Introduction}
Let $X$ be a metric space.
For a finite subset $T\subset X$, the discrete probability measure on $X$ associated to it is given by \[ \mu_{T,X} = \frac{1}{\Card{T}}\sum_{\alpha\in T} \delta_{\alpha,X},\]
where $\Card T$ denotes the cardinality of $T$ and $\delta_{\alpha,X}$ the Dirac measure on $X$ supported at $\alpha$.
In the special case $X=\C^*$, we put $\mu_T=\mu_{T,\C^*}$.

We say that a sequence $(\mu_n)_n$ of probability measures on $X$ \textit{converges in distribution} to $\mu$ if for every bounded continuous function $f : X\to \C$, we have \[ \lim_{n\to +\infty} \int_X fd\mu_n = \int_X fd\mu. \]
An important example of such a sequence was given by Bilu, see Theorem \ref{thm-Bilu} below.
If the limit above holds for all compactly supported continuous functions, we say that $\mu_n$ \textit{weakly converges} to $\mu$. 

Throughout this text, we define $h:\Qb\to \R$ to be the (logarithmic, absolute) Weil height and $\lambda$ to be the Haar probability measure on the complex unit circle.  We also denote by $\mu_\infty$ the set of roots of unity in $\Qb$.

\begin{theorem} [Bilu, \cite{Bilu}] \label{thm-Bilu}
Let $K$ be a number field, and let $(\alpha_n)_n$ be a sequence of $\Qb^*$ such that $h(\alpha_n)\to 0$ and $[K(\alpha_n) : K]\to +\infty$. 
Then $\mu_{S_n}$ converges in distribution to $\lambda$, where $S_n$ is the Galois orbit of $\alpha_n$ over $K$.
\end{theorem}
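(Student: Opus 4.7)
The plan is to follow Bilu's original potential-theoretic strategy. Write $d_n := [K(\alpha_n):K]$; the measure $\mu_{S_n}$ is supported on a set of cardinality $d_n$. The key characterization of $\lambda$ we will use is that $\lambda$ is the unique probability measure on the closed unit disk $\{|z|\le 1\}$ minimizing the logarithmic energy
\[I(\mu) := -\iint \log|z-w|\, d\mu(z)\, d\mu(w),\]
with $I(\lambda) = 0$. Consequently, it suffices to establish (i) $(\mu_{S_n})_n$ is tight in $\C^*$ and every weak limit is supported on the unit circle, and (ii) every weak limit $\mu$ satisfies $I(\mu) \leq 0$.

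For (i), the definition of the Weil height, restricted to archimedean places, yields
\[\int \log^+|z|\, d\mu_{S_n}(z) \leq [K:\Q]\, h(\alpha_n) \to 0.\]
The same estimate applied to $1/\alpha_n$ (which has the same Weil height) controls $\int \log^+|1/z|\, d\mu_{S_n}$. Together, a Markov-type inequality on the level sets $\{|z|>R\}$ and $\{|z|<r\}$ gives tightness in $\C^*$ and forces every weak limit to be supported on $\{|z|=1\}$.

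For (ii), one considers the off-diagonal discrete energy
\[I^*(\mu_{S_n}) := -\frac{1}{d_n(d_n-1)} \sum_{\substack{\alpha,\beta\in S_n \\ \alpha\neq\beta}} \log|\alpha-\beta|\]
and aims to show $I^*(\mu_{S_n}) \to 0$. Let $P_n \in \mathcal{O}_K[X]$ be a primitive polynomial vanishing at $\alpha_n$. Then the inner sum equals, up to an explicit term involving the leading coefficient of $P_n$, the logarithm of the archimedean part of $\mathrm{disc}(P_n)$. Since $N_{K/\Q}(\mathrm{disc}(P_n))$ is a nonzero rational integer, the product formula yields $I^*(\mu_{S_n}) \leq C\cdot h(\alpha_n)$ with $C$ depending only on $[K:\Q]$; both the leading coefficient contribution and the non-archimedean part of the discriminant are absorbed by $h(\alpha_n)$. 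A standard truncation with the bounded continuous function $\phi_M(z,w) := \min(-\log|z-w|,M)$ then gives, for any weak limit $\mu$,
\[\iint \phi_M\, d\mu\, d\mu = \lim_n \iint \phi_M\, d\mu_{S_n}\, d\mu_{S_n} \leq \liminf_n \bigl(I^*(\mu_{S_n}) + \tfrac{M}{d_n}\bigr) = 0,\]
where the inequality uses that $\phi_M(\alpha,\alpha)=M$ contributes only $M/d_n$ in the discrete diagonal term. Letting $M \to \infty$ and applying monotone convergence gives $I(\mu) \leq 0$, whence $\mu = \lambda$.

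\textbf{Main obstacle.} The crucial input is the discrete energy estimate $I^*(\mu_{S_n}) \to 0$. For $K=\Q$ this is classical, resting on the fact that $\mathrm{disc}(P_n)$ is a nonzero integer and hence has absolute value at least $1$. For general $K$ the analogous lower bound must be derived via the product formula over all places of $K$; this is where the technical work of Bilu's paper concentrates. The two hypotheses enter in tandem: $d_n\to\infty$ ensures the correction term $M/d_n$ from the truncation vanishes in the limit, while $h(\alpha_n)\to 0$ controls both the tightness bound and the archimedean side of the discriminant.
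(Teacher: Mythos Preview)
Your sketch follows Bilu's original potential-theoretic proof and is essentially correct: tightness plus support on the unit circle from the height decay, then the energy inequality $I(\mu)\le 0$ for any weak limit via the discriminant/product-formula bound on $I^*(\mu_{S_n})$, forcing $\mu=\lambda$ by Frostman's uniqueness. The deferral of the general-$K$ discriminant estimate to Bilu's paper is honest; that step does require summing over all archimedean places of $K$ and then using that the $[K:\Q]$ sets $\tau(S_n)$ (one per embedding $\tau$ of $K$) all have the same discrete energy up to conjugation.

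However, the paper does \emph{not} recover Bilu's theorem by this route. It instead implements Mignotte's method: a direct radial estimate (Theorem~\ref{thm-radial}) from the definition of the height, combined with an angular estimate (Theorem~\ref{thm-angular}) obtained by feeding an auxiliary polynomial from an \emph{absolute} Siegel lemma into the Erd\H{o}s--Tur\'an discrepancy theorem. The two approaches differ in what arithmetic input replaces ``$|\mathrm{disc}|\ge 1$'': you use integrality of the discriminant, whereas the paper uses integrality of the extreme coefficients $f_0f_D$ of the auxiliary polynomial (so that $\prod_\sigma|\sigma(f_0f_D)|\ge 1$). The payoff of the paper's route is precisely what your method cannot give: because the absolute Siegel lemma produces $F$ over $\overline{\Q}$ with height controlled only by $\overline{h}(S)$ and $|S|$, the angular bound~\eqref{eq-angular} holds for \emph{arbitrary} finite $S\subset\overline{\Q}^*$, not just Galois orbits. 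Your discriminant argument is inextricably tied to $S_n$ being the full root set of an integral polynomial, so it does not extend to the main Theorem~\ref{gen Bilu intro}. Conversely, your approach is shorter when Galois invariance is available and yields the sharper energy-type quantitative bounds found in \cite{Petsche, FavreLetelier}.
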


This theorem (which was originally formulated with $K=\Q$) was inspired by a previous work of Szpiro, Ullmo and Zhang who studied the equidistribution of points of small N\'eron-Tate height on abelian varieties \cite{SUZ}. These two well-known results have been largely generalized to other heights and places, see for instance \cite{Zhang, Rumely, BakerHsia, BakerPetsche, Petsche, FavreLetelier, BakerRumely, ChambertLoir, Yuan, Gubler, Dandrea, Burgos, Demarco, BakerMasser}.
Roughly speaking, each one of these results contains an equidistribution theorem \textit{\`a la} Bilu, that is, a statement of the form ``Let $K$ be a number field, and let $S_n$ be a $\Gal(\overline{K}/K)$-invariant subset of $X$ such that the average of height of $\alpha$, with $\alpha\in S_n$, goes to $0$ and $\Card{S_n}\to +\infty$. Then $\mu_{S_n,X}$ weakly converges to some Haar measure". 

Here we prove an equidistribution theorem for a sequence of finite subsets in $\Qb^*$ which are not necessarily stable by Galois action. 
After posting this paper on ArXiv, Fili informed us that the qualitative version (but not the quantitative one) of our result partially follows from a recent preprint~\cite{Fili}, of which we were unaware. 

The first avatar of equidistribution theorems is a result of Langevin~\cite{Langevin}: Given an open set $\Delta\subset\C$ intersecting the unit circle, the algebraic integers whose Galois conjugates all lie outside $\Delta$ cannot have a Weil height arbitrarily small. The idea of Langevin's proof is to show that the transfinite diameter of the complement of $\Delta$ in the unit disk has transfinite diameter less than $1$. He then concludes by applying a theorem of Fekete~\cite{Fekete} which asserts that there are only a finite number of algebraic integers whose Galois conjugates all belong to a set of transfinite diameter $<1$.

Soon after, Mignotte~\cite{Mignotte} gives an entirely different proof of Langevin's result, see~\cite[Chapter 15]{Masser} for an excellent expository text. Equidistribution theorems, stated in the modern language of weak convergence of probability measures, follow from radial and angular distribution of Galois conjugates of algebraic numbers with small height. 
Here, radial distribution means that ``most of" these conjugates have absolute value close to $1$. 
It is easily established from the definition of the Weil height. 
The angular distribution deals with the distribution in $[0, 2\pi]$ of arguments of these conjugates modulo $2\pi$.
%The angular distribution deals with the distribution of the conjugates in fixed angular sectors centered at the origin
It is the hardest part of Mignotte's proof. The first idea is to apply a result of Erdős and Turán \cite{Erdos} (see Theorem~\ref{thmET} in Section~\ref{section preli}) which asserts that the arguments of roots of a polynomial $Q(X)=\sum_{i=0}^D q_iX^i$ with complex coefficients are well distributed in $[0, 2\pi]$ if the ratio 
\begin{equation}
\label{ET}
\frac{\mathcal{L}(Q)}{\sqrt{\vert q_Dq_0\vert}}
\end{equation}
is ``not too large". Here, $\mathcal{L}(Q)=\sum_{i=0}^D \vert q_i\vert$ denotes the length of $Q$. Unfortunately, the coefficients of the minimal polynomial of an algebraic number $\alpha$ can be very large, even if the height of $\alpha$ is small. This is when the second ingredient of the proof comes in. A classical result in diophantine approximation, the Siegel Lemma, shows that there exists a polynomial $Q\in\Z[X]$ of ``small" degree vanishing at $\alpha$  such that $q_0q_D$ is non-zero and its coefficients are ``not too large" if the height of $\alpha$ is small. The quantity in~\eqref{ET} is therefore ``not too large" since $\vert q_0q_D\vert\geq1$.

In the nineties, Mignotte pointed out to the first author that to obtain the angular distribution, we do not need to have an auxiliary polynomial with integer coefficients, but only with $\vert q_0q_D\vert$ not too small. This innocent remark was one of the starting points of our investigation.
 
In this article, we apply Mignotte's method to deal with the radial and angular distribution of sets $S$ with large cardinality and made of algebraic numbers with small height. The novelty is that our sets are no more assumed to be stable by Galois action.
This prevents us from using the standard Siegel Lemma, which makes the study of the angular distribution more complicated. Fortunately, there is an absolute version of this lemma, which follows from deep results of Zhang \cite[Theorem 1.10]{Zhang2}.
This {\sl absolute Siegel Lemma} can be applied to our situation, but it provides us an auxiliary polynomial $Q\in\overline{\Q}[X]$  whose coefficients (which can be assumed to be algebraic integers) cannot be controlled. 
In particular, $\vert q_0q_D\vert$ can be as small as possible, but its absolute norm has to be a positive integer. This naturally leads us to consider the {\sl arithmetic mean} of the radial and angular discrepancy of the conjugate sets of $S$. We prove that both of them are small. 

Let $r>1$ be a real. 
We write $\A_r$ for the closed annulus centred at the origin with inner radius $1/r$ and outer radius $r$. We also denote by $\m{S}$ the (arithmetic) mean of $h(\alpha)$ with $\alpha\in S$, that is, 
\[\m{S}= \frac{1}{\Card{S}}\sum_{\alpha\in S} h(\alpha).\]
We finally set
\[\h{S}= 24\left(\m{S}+\frac{\log(2\Card{S})}{\Card{S}}\right)^{1/3}.\]

\begin{theorem} 
\label{thm-radial-angular}
Let $S\subset \overline{\Q}^*$ be a finite subset. 
\begin{itemize}
\item[(1)] For any $r>1$, we have 
\[\frac{1}{[\Q(S):\Q]}\sum_{\sigma\colon\Q(S) \hookrightarrow \C} \frac{\Card{\sigma(S)\backslash \A_r}}{\Card{S}} \leq \frac{2\m{S}}{\log r}.\]
\item[(2)] For any sector $\Delta$ of angle $\theta\in[0,2\pi]$ based at the origin, we have
\[\frac{1}{[ \Q(S) : \Q]}\sum_{\sigma\colon\Q(S) \hookrightarrow \C} \left\vert\frac{\Card{\sigma(S)\cap\Delta}}{\Card{S}}- \frac{\theta}{2\pi}\right\vert \leq  \h{S}. \]
\end{itemize}
\end{theorem}

When $S$ is Galois invariant, we recover Mignotte's results~\cite{Mignotte}. The following example is a good illustration of what happens when we drop the assumption of Galois invariance on $S$. Let $p$ be a prime number, and let $\zeta_p=\exp(2i\pi/p)$. We choose $S=S_p$ as the set of $p$-roots of unity $\zeta_p^k$ with $1\leq k\leq[\sqrt{p}]$, where $[x]$ denotes as usual the integer part of $x\in\R$. Thus 
\[\h{S_p}= 24\left(\frac{\log(2[\sqrt{p}])}{[\sqrt{p}]}\right)^{1/3}\to 0\] 
as $p\to\infty$. We fix a sector $\Delta$ of angle $\theta\in[0,2\pi]$ based at the origin with $1\not\in\Delta$. Then  
$S_p\cap\Delta$ is empty when $p$ is sufficiently large; whence
\[\left\vert\frac{\Card{S_p\cap\Delta}}{\Card{S_p}}- \frac{\theta}{2\pi}\right\vert=\frac{\theta}{2\pi}.\]
However, by Theorem~\ref{thm-radial-angular}(2) (and as it can be directly verified), 
\[\frac{1}{p-1}\sum_{\sigma\colon\Q(\zeta_p) \hookrightarrow \C}\left\vert\frac{\Card{\sigma(S_p)\cap\Delta}}{\Card{S_p}}- \frac{\theta}{2\pi}\right\vert\to 0\]
as $p\to\infty$.
This means that there are a "small" number of $\Q$-embeddings $\sigma : \Q(\zeta_p) \to \C$ for which the ratio $\Card{\sigma(S_p)\cap\Delta}/\Card{S_p}$ is "far" from $\theta/(2\pi)$. \\

Theorem~\ref{thm-radial-angular} allows us to prove new results on points of small Weil height. Following Bombieri and Zannier~\cite{Bombieri}, we say that a set $S$ of algebraic numbers has the {\it Bogomolov property}, or short property $(B)$, if there exists a positive constant $c$ such that the Weil height of an element in $S$ is either $0$ or bounded from below by $c$. Property $(B)$ was established for the maximal totally real extension $\Q^{\rm tr}$ of $\Q$ by Schinzel, see~\cite{Schinzel}. %(with $c=\frac12\log\frac{1+\sqrt{5}}2$). 
Note that it is also an immediate consequence of Theorem \ref{thm-Bilu}. The following corollary can be viewed as a generalization of this result.
\begin{corollary}
\label{KQtr}
Let $L$ be an algebraic field. Then for any $\alpha$ in the group product $L^*(\Q^{\rm tr})^*$ such that $h(\alpha)<5\cdot 10^{-6}$, we have 
$[L(\alpha):L]<4\cdot 10^6$.
\end{corollary}
\begin{proof}
Assume by contradiction that there exists $\alpha\in L^*(\Q^{\rm tr})^*$ such that $h(\alpha)<5\cdot 10^{-6}$ and $d=[L(\alpha):L]\geq 4\cdot 10^6$. We write $\alpha = y z$ with $y\in L^*$ and $z\in (\Q^{\rm tr})^*$. Define $S$ as the Galois orbit of $\alpha$ over $L$. Thus, for any $\Q$-embedding $\sigma\colon\Q(S)\rightarrow\C$, we have $\sigma(S)\subseteq \sigma(y)\R$. Hence, for any sector $\Delta$ based at the origin, 
\[\frac{\Card{\sigma(S)\cap\Delta}}{\Card{S}}=
\begin{cases}
1 &\hbox{ if } \sigma(y)\in\Delta;\\
0 &\hbox{ otherwise.}
\end{cases}\] 
Thus, choosing for $\Delta$ any sector of angle $\pi$, the value of the left-hand side in Theorem~\ref{thm-radial-angular}(2) is $1/2$. On the other hand, $\Card{S}=d$ and $\log(2d)/d< 4\cdot 10^{-6}$ since $d\geq 4\cdot 10^6$ by assumption. Moreover, $\m{S}=h(\alpha)<5\cdot 10^{-6}$.
In conclusion, $\h{S}= 24(\m{S}+\log(2\Card{S})/\Card{S})^{1/3}<1/2$, a contradiction.
\end{proof}
The maximal abelian extension $\Q^{\rm ab}$ of $\Q$ also satisfies property $(B)$, as it has been conjectured by Zannier and proved in~\cite{AmorosoDvornicich}. However, the compositum of $\Q^{\rm tr}$ and $\Q^{\rm ab}$ does not satisfy property $(B)$ since its subfield $\Q^{\rm tr}(i)$ does not satisfy $(B)$, see~\cite[Theorem 5.3]{AmorosoDavidZannier}. Nevertheless, 
\begin{corollary}
\label{QabQtr}
The group product $G=(\Q^{\rm ab})^*(\Q^{\rm tr})^*$ satisfies property $(B)$. 
\end{corollary}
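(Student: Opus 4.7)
The plan is to argue by contradiction, combining the two property $(B)$ results recalled just above: Schinzel's for $\Q^{\rm tr}$ (which the paper notes is itself a consequence of Theorem \ref{thm-Bilu}) and Amoroso--Dvornicich's for $\Q^{\rm ab}$. Suppose towards a contradiction that a sequence $(\gamma_n)_n$ in $G$ exists with each $\gamma_n \notin \mu_\infty$ and $h(\gamma_n) \to 0$. By definition of $G$ I write $\gamma_n = \alpha_n \beta_n$ with $\alpha_n \in (\Q^{\rm ab})^*$ and $\beta_n \in (\Q^{\rm tr})^*$.

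Fix an embedding $\Qb \hookrightarrow \C$ and let $c \in \Gal(\Qb/\Q)$ be the induced complex conjugation. Two remarks drive the argument: first, $c(\beta_n) = \beta_n$ since all conjugates of $\beta_n$ are real; and second, because $\Gal(\Q^{\rm ab}/\Q)$ is abelian, the fixed field of $c$ inside $\Q^{\rm ab}$ is the maximal real subfield $\Q^{\rm ab,+} := \Q^{\rm ab}\cap \R$, which is contained in $\Q^{\rm tr}$. Accordingly I form
\[
A_n := \gamma_n\, c(\gamma_n) = \alpha_n c(\alpha_n)\, \beta_n^2 \qquad \text{and} \qquad B_n := \gamma_n/c(\gamma_n) = \alpha_n/c(\alpha_n).
\]
The factor $\alpha_n c(\alpha_n)$ is visibly $c$-fixed, hence lies in $\Q^{\rm ab,+} \subset \Q^{\rm tr}$, so $A_n \in \Q^{\rm tr}$; and trivially $B_n \in \Q^{\rm ab}$. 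Sub-additivity of $h$ together with its Galois invariance gives $h(A_n),\, h(B_n) \le 2 h(\gamma_n) \to 0$.

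Now the endgame is immediate. Schinzel's property $(B)$ applied to $A_n$ forces $A_n \in \mu_\infty \cap \Q^{\rm tr} = \{\pm 1\}$ eventually; since the chosen embedding sends $A_n$ to $|\gamma_n|^2 > 0$, we must have $A_n = 1$. Amoroso--Dvornicich's property $(B)$ applied to $B_n$ makes $B_n \in \mu_\infty$ eventually. Multiplying,
\[
\gamma_n^2 \;=\; A_n \cdot B_n \;=\; B_n \;\in\; \mu_\infty,
\]
so $\gamma_n \in \mu_\infty$, contradicting our hypothesis. The one nontrivial step, and thus the main obstacle, is the identification $\alpha_n c(\alpha_n) \in \Q^{\rm tr}$; it crucially uses the abelianness of $\Gal(\Q^{\rm ab}/\Q)$ in the form $\Q^{\rm ab,+} \subset \Q^{\rm tr}$, and is exactly what distinguishes the \emph{multiplicative} group $G$ from the field $\Q^{\rm ab}\cdot\Q^{\rm tr}$, whose subfield $\Q^{\rm tr}(i)$ (cited just before the corollary) already fails property $(B)$ and so rules out a parallel argument at the level of the field compositum.
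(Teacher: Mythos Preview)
Your proof is correct and elegantly self-contained; the complex-conjugation trick $A_n=\gamma_n c(\gamma_n)$, $B_n=\gamma_n/c(\gamma_n)$ cleanly separates the totally real and abelian pieces, and the key inclusion $\Q^{\rm ab}\cap\R\subset\Q^{\rm tr}$ (complex conjugation being central in the abelian Galois group) is exactly what makes $A_n$ land in $\Q^{\rm tr}$. All the height estimates and the endgame are sound.

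However, your route is quite different from the paper's. The paper derives Corollary~\ref{QabQtr} as the special case $K=\Q^{\rm ab}$, $L=\Q$, $X=H_\Q$ of the more general Corollary~\ref{groupeB}, whose proof rests on the paper's main equidistribution result, Theorem~\ref{gen Bilu intro}. There one writes $\gamma_n=y_nz_n$ with $y_n\in K^*$, $z_n\in(\Q^{\rm tr})^*$, notes that the Galois orbit $S_n$ of $\gamma_n$ over $K$ is a translate of the orbit of $z_n$, and uses the generalized Bilu theorem to find embeddings $\sigma_n$ for which $\mu_{\sigma_nS_n}\to\lambda$; a density count then guarantees such $\sigma_n$ can be chosen with $\sigma_nS_n$ lying on a single real line, contradicting equidistribution.

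What each approach buys: yours is far more elementary, needing only Schinzel and Amoroso--Dvornicich plus a Galois-theoretic observation, and it avoids the equidistribution machinery entirely. The paper's approach, though heavier, is what actually \emph{uses} the new theorem and immediately generalizes to $K^*(L^{tX})^*$ for any $K$ with property $(B+)$ and any $X$ of positive density (Corollary~\ref{groupeB}); your argument hinges on $\Q^{\rm ab}\cap\R\subset\Q^{\rm tr}$, which is special to the abelian case and does not carry over to an arbitrary $(B+)$ field $K$.
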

\begin{proof}
Assume by contradiction that there exists a sequence $(\alpha_n)_n$ belonging to $G\backslash\mu_\infty$ such that $h(\alpha_n)\to 0$.  
By~\cite[Theorem 1.1]{AmorosoZannier}, $[\Q^{\rm ab}(\alpha_n) : \Q^{\rm ab}]\to +\infty$. We then apply Corollary~\ref{KQtr} with $L=\Q^{\rm ab}$. 
\end{proof}

Theorem~\ref{thm-radial-angular} can be formulated in terms of convergence in distribution, as it was expected in~\cite{Masser} 
(see the paragraph therein around equation (15.9)). Let $r>1$, and let $f: \C^*\to \C$ be a function that is Lipschitz on $\A_r$. We define $\mathrm{Lip}_r(f)$ to be the Lipschitz constant of $f$ on $\A_r$. 
The infinite norm of $f$ on a set $T\subset \C^*$ is denoted with $\lVert f\rVert_{\infty,T}$.
Finally, given a subset $S\subset\Qb^*$, we define $\SGal$ as the smallest $\Gal(\overline{\Q}/\Q)$-invariant set containing $S$.

\begin{theorem} 
\label{main thm}
Let $S\subset \Qb^*$ be a finite set, let $r>1$ be a real number, and let $N\geq 2$ be an integer.
\begin{itemize}
\item[(1)] For all functions $f: \C^* \to \C$ that are Lipschitz on $\A_r$, we have 
\begin{multline*}
\frac{1}{[ \Q(S) : \Q]}\sum_{\sigma\colon\Q(S) \hookrightarrow \C}\left\vert \int_{\C^*} fd \mu_{\sigma(S)} - \int_{\C^*} fd\lambda\right\vert\\
\leq\frac{4r\pi \mathrm{Lip}_r(f)}{N}+(\Vert f\Vert_{\infty,\SGal\backslash \A_r}+ 2\lVert f\rVert_{\infty,\A_r})\frac{2\m{S}}{\log r}
+ 2N\lVert f\rVert_{\infty,\A_r}\h{S}.
\end{multline*}
\item[(2)] Let $\e\in (0,1)$, and let $L$ be a number field. Then there exists a set $\Lambda=\Lambda(S,r,N,\e,L)$ of $L$-embeddings $L(S)\hookrightarrow\C$ with cardinality at least $(1-\e)[L(S):L]$ such that
\begin{multline*}
\left\vert \int_{\C^*} fd \mu_{\sigma(S)} - \int_{\C^*} fd\lambda\right\vert
\leq\frac{4r\pi \mathrm{Lip}_r(f)}{N}\\
+(\Vert f\Vert_{\infty,\SGal\backslash \A_r}+ 2\lVert f\rVert_{\infty,\A_r})\frac{4[L:\Q]\m{S}}{\e\log r}
+ \frac{4[L:\Q]N^2}{\e}\lVert f\rVert_{\infty,\A_r}\h{S}
\end{multline*}
for all $\sigma\in \Lambda$ and all functions $f: \C^* \to \C$ that are Lipschitz on $\A_r$. 
\end{itemize}
\end{theorem}

Theorem \ref{main thm} implies a quantitative version of Bilu's equidistribution theorem.  Such versions already exist in the literature, see \cite{Petsche, FavreLetelier, Dandrea, BakerMasser}. In all these articles, the estimations are stronger than ours, but they can only hold for much more restrictive functions. For instance, in the first three references, $f$ must be at least bounded and differentiable on $\C^*$ and in the last one, $f$ has to be Lipschitz on $\A_r$, continuous on $\C^*$ and satisfy $\vert f(z)\vert \leq \log \vert z\vert$ when $\vert z\vert \geq r$ as well as $\vert f(z)\vert \leq \log \vert z\vert^{-1}$ when $0< \vert z\vert \leq 1/r$. 
Regarding the test functions of Theorem \ref{main thm}, they must be Lipschitz on $\mathcal{A}_r$, but can be unbounded and totally discontinuous outside. 

Theorem \ref{main thm} has the following qualitative consequence:

\begin{corollary} 
\label{cor gen Bilu}
Let $(S_n)_n$ be a sequence of finite subsets of $\Qb^*$ such that $\Card{S_n} \to +\infty$ and $\m{S_n}\to 0$. Let $V\subset \C^*$ be a neighbourhood of the unit circle. We consider the class of test functions $f : \C^* \to \C$ satisfying
\begin{equation}
\label{test}
\hbox{$f$ is continuous on} \; V \quad \hbox{and}\quad \m{S_n}\Vert f\Vert_{\infty,S_n^{\Gal}\backslash V}\to 0, \hbox{ as } n\to\infty.
\end{equation}
Then
\begin{itemize}
\item[(1)] We have 
\[\frac{1}{[ \Q(S_n) : \Q]}\sum_{\sigma\colon\Q(S_n) \hookrightarrow \C}\left\vert \int_{\C^*} fd \mu_{\sigma(S_n)} - \int_{\C^*} fd\lambda\right\vert
\rightarrow0\]
for all functions $f: \C^*\to \C$ satisfying~\eqref{test}.
\item[(2)] Let $\e \in (0,1)$, and let $L$ be a number field. 
Then for all integers $n\geq1$, there is a set $\Lambda_n$ of $L$-embeddings $L(S_n)\hookrightarrow\C$ with cardinality at least $(1-\e)[L(S_n):L]$ such that for all $(\sigma_n)_n\in\prod_n \Lambda_n$, we have 
\begin{equation*}
\frac{1}{\Card{S_n}}\sum_{\beta\in \sigma_n S_n} f(\beta) \to \int_0^1 f(e^{2i\pi t})dt 
\end{equation*}
for all functions $f: \C^*\to \C$ satisfying~\eqref{test}.
\end{itemize}
\end{corollary}
Note that every bounded continuous function $f \colon\C^*\to \C$ satisfies~\eqref{test}. Thus, the second assertion of this theorem implies:

\begin{corollary} 
\label{cor Fili}
Let $(S_n)_n$ be a sequence of finite subsets of $\Qb^*$ such that $\Card{S_n} \to +\infty$ and $\overline{h}(S_n) \to 0$.
Let $L$ be a number field. 
Then for all integers $n\geq 1$, there is a set $\Lambda_n$ of $L$-embeddings $L(S_n)\hookrightarrow\C$ with cardinality at least $(1-\e)[L(S_n):L]$ such that for all $(\sigma_n)_n\in\prod_n \Lambda_n$, the sequence of discrete probability measures $\mu_{\sigma_n S_n}$ converges in distribution to $\lambda$.
\end{corollary}

Corollary~\ref{cor Fili} partially follows from \cite[Theorem~3.17]{Fili}. 
More precisely, under the same assumptions on $S_n$ (which corresponds to a very special case of \cite[Theorem~3.17]{Fili}), Doyle, Fili and Tobin obtained the same conclusion than ours, but for the weak convergence.
Their proof is based on potential theory, which is the other classical approach to deal with equidistribution.

Theorem \ref{thm-Bilu} does not hold anymore if we relax the assumption ``$K$ is a number field" to ``$K$ is an algebraic field". Indeed, put $\alpha_n=(1-e^{2i\pi/n^n})^{1/n}$. Let $S_n$ be the Galois orbit of $\alpha_n$ over $\Q(\mu_\infty)$. Clearly, $h(\alpha_n)\leq (\log 2)/n$ and $\alpha_n\notin\mu_\infty$ for all integers $n\geq1$. 
Hence, $h(\alpha_n)\to 0$ and $\Card{S_n} \to +\infty$. 
Moreover, each element of $S_n$ has absolute value $\vert \alpha_n\vert$. 
The series expansion of the exponential implies $\alpha_n\to 0$, and so the sequence $\mu_{S_n}$ cannot converge in distribution to $\lambda$.  
This elementary example shows that the conclusion of the next corollary is somehow optimal. 
\begin{corollary} 
\label{cor obvious}
Let $K$ be an algebraic field, and let $(\alpha_n)_n$ be a sequence of $\Qb^*$ such that $h(\alpha_n)\to 0$ and $[K(\alpha_n) : K]\to +\infty$. 
Let $L$ be a number field.
Then there is a set $\Lambda_n$ of $L$-embeddings $L(S_n)\hookrightarrow\C$ with cardinality at least $(1-\e)[L(S_n):L]$ such that for all $(\sigma_n)_n\in\prod_n \Lambda_n$, the sequence of discrete probability measures $\mu_{\sigma_n S_n}$ converges in distribution to $\lambda$, where $S_n$ is the Galois orbit of $\alpha_n$ over $K$. 
\end{corollary}
\begin{proof}
Take for $S_n$ the set of conjugates of $\alpha_n$ over $K$ in Corollary~\ref{cor Fili}.
\end{proof}

\subsection*{Plan of the article}
The plan of the article is as follows. In Section~\ref{section preli}, we implement our generalisation of Mignotte's method to treat the radial and angular distributions, then we prove Theorem~\ref{thm-radial-angular}. In Section~\ref{main}, we deduce Theorem~\ref{main thm} from Theorem~\ref{thm-radial-angular} and Corollary~\ref{cor gen Bilu} from Theorem~\ref{main thm}.

\subsection*{Acknowledgement}
We thank Fili, Sombra and Weiss for their fruitful discussions. The second author sincerely thanks the \textit{Laboratoire de Math\'ematiques N. Oresme} of \textit{Universit\'e de Caen Normandie} for housing him on May 2023 so that he can work face-to-face with the first author. 
He was also funded by the Morningside Center of Mathematics, CAS.  

\section{Radial and angular distribution} \label{section preli}
The proof of Theorem~\ref{thm-radial-angular} is based on arguments due to Mignotte, which are well highlighted in \cite[Chapter 15]{Masser}. We will follow the exposition of this book. Throughout this section $r$ denotes a real number greater than $1$ and $S\subset \overline{\Q}^*$ a finite set.

\subsection{Radial distribution} \label{section radial}
The goal of this subsection is to establish that the mean of the number of elements belonging to $\sigma(S)\backslash \A_r$, where $\sigma$ ranges over all $\Q$-embeddings $\Q(S) \hookrightarrow \C$, cannot be ``too large" if $\m{S}$ is small. Recall that $\delta_x$ is the Dirac measure on $\C^*$ supported at $x$.
Let us start by the following lemma. 

\begin{lemma}[\cite{Masser}, Theorem 15.1] \label{lmm Masser}
For all $\alpha\in\Qb^*$, we have \[ \sum_{\tau : \Q(\alpha)\hookrightarrow\C} \delta_{\tau\alpha}(\A_r) \geq [\Q(\alpha) : \Q]\left(1-\frac{2h(\alpha)}{\log r}\right).\]
\end{lemma}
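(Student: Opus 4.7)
The plan is to reduce the statement to the archimedean part of the standard formula for the Weil height. Put $D=[\Q(\alpha):\Q]$ and $K=\Q(\alpha)$. Since $h(\alpha^{-1})=h(\alpha)$ (an immediate consequence of the product formula), and since dropping the non-archimedean contributions in the height formula can only decrease the right-hand side, I would obtain the two inequalities
\[
D\,h(\alpha)\;\geq\;\sum_{\tau:K\hookrightarrow\C}\log^+|\tau\alpha|,\qquad D\,h(\alpha)\;\geq\;\sum_{\tau:K\hookrightarrow\C}\log^+|\tau\alpha|^{-1},
\]
where $\log^+ x=\max(\log x,0)$. The factor $[K_v:\Q_v]$ decorating each archimedean place matches exactly the size of the fibre over $v$ of the map from complex embeddings of $K$ to archimedean places (namely $1$ for real places and $2$ for complex ones), so the replacement of the usual sum over places by a sum over embeddings of $K$ introduces no extra factor.

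Adding the two inequalities and using $\log^+ x+\log^+ x^{-1}=|\log x|$ for $x>0$, I then get
\[
2D\,h(\alpha)\;\geq\;\sum_{\tau:K\hookrightarrow\C}\bigl|\log|\tau\alpha|\bigr|.
\]
By definition of $\A_r$, every embedding $\tau$ with $\tau\alpha\notin\A_r$ contributes at least $\log r$ to the right-hand side. Hence
\[
\Card{\{\tau:\tau\alpha\notin\A_r\}}\;\leq\;\frac{2D\,h(\alpha)}{\log r},
\]
and subtracting from $D$ yields the claimed lower bound on $\sum_{\tau}\delta_{\tau\alpha}(\A_r)=\Card{\{\tau:\tau\alpha\in\A_r\}}$.

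There is no genuine obstacle. The only point requiring care is the book-keeping for complex archimedean places, where one must confirm that each such place $v$ contributing $2\log^+|\alpha|_v$ to $D\,h(\alpha)$ corresponds precisely to the two complex-conjugate embeddings that together contribute $2\log^+|\tau\alpha|$ to the embedding-indexed sum. Once that identification is made, the remainder of the argument is the Chebyshev-style observation that a non-negative function whose sum is small cannot exceed a large threshold on too many points.
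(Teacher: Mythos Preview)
Your argument is correct. The identification of the archimedean sum over places with the sum over embeddings is handled properly, the use of $h(\alpha^{-1})=h(\alpha)$ is the right trick to capture conjugates with small absolute value, and the final Chebyshev-type step is clean.

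As for the comparison: the paper does not give its own proof of this lemma. It is quoted verbatim as Theorem~15.1 of Masser's book and used as a black box. Your proof is the standard one (and is essentially what appears in Masser), so there is nothing further to contrast.
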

We can now prove the ``radial part" of Theorem~\ref{thm-radial-angular}.
\begin{proof}[Proof of Theorem~\ref{thm-radial-angular}(1)]
Since $\Card{\sigma(S)\cap \A_r}+\Card{\sigma(S)\backslash \A_r}=\Card{S}$, the inequality in Theorem~\ref{thm-radial-angular}(1) is equivalent to \[ \frac{1}{[\Q(S):\Q]}\sum_{\sigma\colon\Q(S) \hookrightarrow \C} \Card{\sigma(S)\cap \A_r} \geq \Card{S}\left(1-\frac{2\m{S}}{\log r}\right),\] 
which we now prove. The set $\sigma(S)\cap\A_r$ has cardinality $\sum_{\alpha\in S} \delta_{\sigma\alpha}(\A_r)$ for all $\sigma\colon\Q(S)\hookrightarrow \C$. 
Thus, \[\sum_{\sigma\colon\Q(S)\hookrightarrow \C} \Card{\sigma(S)\cap\A_r} = \sum_{\alpha\in S}\sum_{\sigma\colon\Q(S)\hookrightarrow \C} \delta_{\sigma\alpha}(\A_r).\]  
Then, each $\Q$-embedding $\tau : \Q(\alpha) \hookrightarrow \C$ can be extended in $[\Q(S) : \Q(\alpha)]$ different ways to a $\Q$-embedding from $\Q(S)$ to $\C$, which leads to 
\[\sum_{\sigma\colon\Q(S)\hookrightarrow \C} \Card{\sigma(S)\cap\A_r}= \sum_{\alpha\in S} [ \Q(S) : \Q(\alpha)]\sum_{\tau : \Q(\alpha)\hookrightarrow \C} \delta_{\tau\alpha}(\A_r). \]
By Lemma~\ref{lmm Masser}, the right-hand side is at least $[\Q(S) : \Q]\sum_{\alpha\in S} \left(1-\frac{2h(\alpha)}{\log r}\right)$. 
Theorem \ref{thm-radial-angular}(1) now arises from a small calculation.
\end{proof}

\subsection{Angular distribution} \label{section angulaire}
%The ``radial part" of Theorem~\ref{thm-radial-angular} asserts that for ``many" $\Q$-embeddings $\sigma\colon\Q(S)\hookrightarrow \C$, ``most of" elements in $\sigma(S)$ lie ``near" the unit circle when $\m{S}$ is small. 
This subsection aims to show that the elements of $\sigma(S)$ are, in average, angularly well distributed when $\h{S}$ 
%\[\h{S}= 24\left(\m{S}+\frac{\log(2\Card{S})}{\Card{S}}\right)^{1/3}\] 
is small.
Concretely, if $\Delta$ is a sector of angle $\theta$ based at the origin, then the mean of number of elements belonging to $\sigma(S)\cap\Delta$, where $\sigma$ runs over all $\Q$-embeddings $\Q(S)\to \C$, is approximately $\theta\Card{S}/(2\pi)$ when $\h{S}$ is small.   

%The purpose of this section is to get the following. 

\begin{remark}
\label{remark-angular}
The left-hand side in Theorem~\ref{thm-radial-angular}(2) is obviously bounded from above by $1$. 
Hence, the theorem is trivial unless $\h{S}\leq 1$. Moreover, if  $\h{S}\leq 1$, then the definition of $\h{S}$ implies $24(\log(2\Card{S})/\Card{S})^{1/3}\leq 1$, and so $\Card{S}\geq 3$. 
To summarize, we can reduce the proof of Theorem~\ref{thm-radial-angular}(2) to the case that $\h{S}\leq 1$ and $\Card{S}\geq 3$, what we now assume.
\end{remark}

The proof of Theorem~\ref{thm-radial-angular}(2) is mainly based on two ingredients.  
The first one is a result due to Erdős and Turán \cite{Erdos}, see also \cite{AmorosoMignotte} for a more modern proof of something slightly sharper. 

For any region $\Delta\subset\C$ and any polynomial $Q\in \C[X]$, we denote by $Z_{\Delta,Q}$ the number of zeroes of $Q$ (with multiplicity) lying in $\Delta$. 

\begin{theorem} [Erdős-Turán] \label{thmET}
Let $\Delta$ be a sector of angle $\theta\in[0,2\pi]$ based at the origin, and let $Q(X)=\sum_{i=0}^D q_iX^i\in \C[X]$ be a polynomial with $q_Dq_0\neq 0$. Then
\[ \left\vert Z_{\Delta, Q}- \frac{\theta}{2\pi} D \right\vert^2 \leq 256D\log\left(\frac{\mathcal{L}(Q)}{\sqrt{\vert q_Dq_0\vert}}\right),  \] 
where $\mathcal{L}(Q)=\sum_{i=0}^D \vert q_i\vert$ denotes the length of $Q$. 
\end{theorem}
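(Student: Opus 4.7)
The plan is to deduce the bound from a discrepancy-theoretic Erdős--Turán inequality for sequences on the circle, applied to the argument sequence of the zeros of $Q$, combined with Fourier and Mahler-measure estimates for $\log|Q|$ on the unit circle.

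First I would write $Q(X)=q_D\prod_{j=1}^D(X-z_j)$ and set $\phi_j=\arg z_j\in[0,2\pi)$. Since $Z_{\Delta,Q}$ counts the indices $j$ with $\phi_j$ lying in an arc of length $\theta$, the quantity in the theorem is precisely the single-arc discrepancy of the point system $(\phi_1,\dots,\phi_D)$ against the uniform measure on $\R/2\pi\Z$. The classical Erdős--Turán discrepancy inequality gives, for every integer $M\geq 1$,
\[
\Bigl|Z_{\Delta,Q}-\tfrac{\theta}{2\pi}D\Bigr|\ \leq\ C_1\Bigl(\tfrac{D}{M}+\sum_{k=1}^M\tfrac{|T_k|}{k}\Bigr),\qquad T_k=\sum_{j=1}^D e^{ik\phi_j},
\]
for an absolute constant $C_1$, and the problem reduces to bounding the trigonometric sums $T_k$.

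Second, I would control the $T_k$ through the Fourier expansion of $\log|Q(e^{i\phi})|$. Writing $z_j=r_je^{i\phi_j}$ and expanding each $\log|e^{i\phi}-z_j|$ in a Fourier series, one obtains the identity
\[
\log|Q(e^{i\phi})|\ =\ \log M(Q)\ -\ \mathrm{Re}\sum_{k\geq 1}\frac{\widetilde T_k}{k}\,e^{-ik\phi},\qquad
\widetilde T_k=\sum_{j=1}^D\min(r_j,r_j^{-1})^k e^{ik\phi_j},
\]
where $M(Q)=|q_D|\prod_j\max(1,r_j)$ is the Mahler measure. Parseval's identity then bounds $\sum_k|\widetilde T_k|^2/k^2$ by $\int_0^{2\pi}\bigl(\log|Q(e^{i\phi})|-\log M(Q)\bigr)^2\,d\phi/(2\pi)$, and Cauchy--Schwarz converts this $\ell^2$ estimate into an $\ell^1$ estimate for $\sum_{k\leq M}|\widetilde T_k|/k$. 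The integral on the right is in turn controlled by the elementary sandwich
\[
\tfrac12\log|q_Dq_0|\ \leq\ \log M(Q)\ \leq\ \log\mathcal{L}(Q),
\]
whose left half follows from $\max(1,r_j)^2\geq r_j$ combined with $|q_0|=|q_D|\prod_j r_j$, and whose right half follows from $|Q(e^{i\phi})|\leq\mathcal{L}(Q)$ and Jensen's formula. Together these yield $\int_0^{2\pi}\bigl(\log|Q|-\log M(Q)\bigr)^2\,d\phi/(2\pi)\ \ll\ \log\bigl(\mathcal{L}(Q)/\sqrt{|q_Dq_0|}\bigr)$, and an optimal choice of $M$ in the discrepancy inequality produces the claimed estimate, with the constant $256$ arising as the square of the sharp constant $16$ of the classical Erdős--Turán inequality.

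The main obstacle is the passage from the weighted sums $\widetilde T_k$ to $T_k$ itself: the damping weights $\min(r_j,r_j^{-1})^k$ can be arbitrarily small when some zero of $Q$ lies far from the unit circle, so one cannot simply replace $\widetilde T_k$ by $T_k$. The standard remedies are either a symmetrization using the reciprocal polynomial $X^D\overline{Q(1/\bar X)}$ (which reflects every zero outside the unit disk to its conjugate-inverse inside, preserving the argument sequence $(\phi_j)$ while controlling $|q_Dq_0|$ and $\mathcal{L}(Q)$ up to constants), or a dichotomy showing that if too many zeros are far from the unit circle then $\mathcal{L}(Q)/\sqrt{|q_Dq_0|}$ is already so large that the asserted bound is trivial. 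Carrying this bookkeeping out carefully enough to retain the explicit constant $16$ on the right-hand side is the most delicate part of the argument.
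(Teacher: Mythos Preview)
The paper does not supply its own proof of this statement: Theorem~\ref{thmET} is quoted as a classical result of Erd\H{o}s and Tur\'an \cite{Erdos}, with a pointer to \cite{AmorosoMignotte} for a sharper modern version, and is then used as a black box in the proof of Lemma~\ref{lmm inter}. So there is no in-paper argument to compare your proposal against.

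As for the proposal itself, the strategy---reducing to the Erd\H{o}s--Tur\'an discrepancy inequality for sequences on $\R/2\pi\Z$ and controlling the Weyl sums $T_k$ via the Fourier expansion of $\log|Q(e^{i\phi})|$---is a recognised route and is close in spirit to \cite{AmorosoMignotte}. You correctly identify the main obstruction, namely the mismatch between $T_k$ and the damped sums $\widetilde T_k$, and the symmetrisation by the reciprocal polynomial is indeed a standard remedy. One step that is more fragile than you indicate is the claimed bound on $\int_0^{2\pi}(\log|Q|-\log M(Q))^2\,d\phi$: the one-sided pointwise inequality $\log|Q(e^{i\phi})|\leq\log\mathcal{L}(Q)$ together with $\log M(Q)\geq\tfrac12\log|q_Dq_0|$ gives an $L^1$ bound on $\log|Q|-\log M(Q)$, not an $L^2$ bound, because $\log|Q(e^{i\phi})|$ can be very negative near zeros on the circle. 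The usual arguments either stay in $L^1$ throughout (as in Ganelius' refinement, on which \cite{AmorosoMignotte} builds) or handle the negative part separately. Finally, the numerology ``$256=16^2$ is the square of the sharp constant in the discrepancy inequality'' is not quite right: the $16$ in the Erd\H{o}s--Tur\'an root-distribution theorem is not the same constant as in the Erd\H{o}s--Tur\'an discrepancy inequality for exponential sums, and tracking the explicit $256$ through your outline would require more care than you suggest.
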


Write $P_S\in\Q(S)[X]$ for the polynomial $\prod_{\alpha\in S} (X-\alpha)$. Thus $\Card{\sigma(S)\cap\Delta}=Z_{\Delta,\sigma P_S}$.
The natural idea to get Theorem~\ref{thm-radial-angular}(2) would be to apply Theorem~\ref{thmET} to $Q=\sigma P_S$ with $\sigma$ running over all $\Q$-embeddings $\Q(S)\hookrightarrow\C$. 
But the mean of $\log(\mathcal{L}(\sigma P_S))$ might be too large, spoiling our chances of getting what we wish.
This is when the second ingredient comes in: the absolute Siegel's lemma.

Let $F(X)=\sum_{i=0}^L f_iX^i\in \Qb[X]$ be a polynomial. The height of $F$, denoted with $h(F)$, is the Weil height of coefficients of $F$, that is, 
\[ h(F) = \frac{1}{[E : \Q]} \sum_v [E_v : \Q_v]\log\max\{\vert f_0\vert_v,\dots,\vert f_L\vert_v\}, \] 
where $E$ is any number field containing $f_0,\dots,f_L$ and where $v$ ranges over all places of $E$. It is well-known that this definition does not depend on the choice of such a field $E$. 
\begin{theorem}[Absolute Siegel's Lemma]
\label{Absolute Siegel's Lemma}
Let $L$ be a positive integer with $L>\Card{S}$. Then there exists a non-zero polynomial $F$, with algebraic integer coefficients and degree $<L$, vanishing at $S$ such that:
\[
h(F)\leq  \frac{\Card{S}}{L+1-\Card{S}}\left(\frac{3}{2}\log(L+1)+(L+1)\m{S}\right) + \frac{\log(L+1)}{2}.
\]
\end{theorem}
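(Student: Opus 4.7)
The plan is to deduce this from Zhang's absolute Siegel lemma \cite[Theorem 1.10]{Zhang2} applied to the Vandermonde-type linear system expressing the vanishing conditions $F(\alpha)=0$ for $\alpha\in S$. Set $n=\Card{S}$, write $S=\{\alpha_1,\dots,\alpha_n\}$, and identify a polynomial $F(X)=\sum_{i=0}^{L-1} f_iX^i$ of degree $<L$ with its coefficient vector $(f_0,\dots,f_{L-1})\in\Qb^L$. The polynomials vanishing on $S$ form the kernel $V\subset\Qb^L$ of the linear map given by the Vandermonde matrix $M=(\alpha_j^i)_{1\le j\le n,\,0\le i\le L-1}$. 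Since the $\alpha_j$ are distinct, $M$ has rank $n$ and $V$ has dimension $L-n\geq 1$, so nontrivial $F$ exist and the problem reduces to finding a short one.

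To bound the projective Arakelov height $h(V)$, I would use the duality $h(V)=h(V^\perp)$ together with a Hadamard-style product-formula estimate: $V^\perp$ is spanned by the rows $v_j=(1,\alpha_j,\dots,\alpha_j^{L-1})$ of $M$, and comparing the local norms of the wedge $v_1\wedge\cdots\wedge v_n$ to the products of the local norms of the $v_j$ yields
\begin{equation*}
h(V)\leq \sum_{j=1}^n h(v_j)+\tfrac{n}{2}\log(L+1)\leq n(L+1)\m{S}+\tfrac{n}{2}\log(L+1),
\end{equation*}
using the identity $h(v_j)=(L-1)h(\alpha_j)$, which follows at each place from the fact that $\max(1,|\alpha_j|_v^k)$ for $0\leq k\leq L-1$ is attained at $k=0$ or $k=L-1$. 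Zhang's theorem then yields a nonzero $v\in V$ with Weil height of the form
\begin{equation*}
h(v)\leq \frac{h(V)}{L+1-n}+\frac{\log(L+1)}{2},
\end{equation*}
and substituting the bound on $h(V)$ produces the claimed inequality. Finally, since the height $h(F)$ defined in the paper is the projective Weil height of the coefficient vector and is scale-invariant by the product formula, we can multiply $F$ by an appropriate element of $\Qb^*$ to clear denominators and obtain algebraic-integer coefficients without altering $h(F)$.

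The main obstacle is invoking Zhang's result in precisely the quantitative form required. It is naturally phrased via successive minima on an adelic normed vector space, and extracting the exact logarithmic constants $\tfrac{3}{2}\log(L+1)$ and $\tfrac{1}{2}\log(L+1)$ together with the coefficient $\tfrac{n}{L+1-n}$ demands careful bookkeeping of both the Hadamard inequality used to bound $h(V)$ and the Minkowski-type comparison between $h(V)$ and the height of the shortest nonzero vector in $V$. Everything else --- the linear-algebra setup, the Vandermonde height computation, and the integrality cleanup --- is essentially routine once the correct form of the absolute Siegel lemma is in hand.
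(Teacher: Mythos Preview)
Your approach is essentially the one the paper points to: the paper does not give a proof in the text but declares the statement an ``easy consequence'' of \cite[Theorem~5.2]{Zhang2}, referring to \cite[Proposition~4.2]{AmorosoZannier} for the details, and that proposition is exactly the Vandermonde/duality argument you outline (set up the $n\times L$ Vandermonde system, bound $h(V)=h(V^\perp)$ via Hadamard and $h(v_j)=(L-1)h(\alpha_j)$, apply Zhang's minima inequality, then clear denominators).

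Two small remarks. First, the paper invokes Theorem~5.2 of \cite{Zhang2}, not Theorem~1.10. Second, your displayed intermediate bounds, taken literally, yield $\tfrac{1}{2}\log(L+1)$ inside the bracket rather than the stated $\tfrac{3}{2}\log(L+1)$, so ``substituting \dots\ produces the claimed inequality'' is not quite accurate as written: you would get something formally \emph{stronger}. The gap is exactly the $h$-versus-$h_2$ bookkeeping you flag as the main obstacle (see the paper's footnote: the cited references work with the $L^2$-height at archimedean places, and the comparisons $h\le h_2$ and $h_2(v_j)\le h(v_j)+\tfrac{1}{2}\log L$ are what introduce the extra $\log(L+1)$); also note that $\dim V=L-n$, so the denominator coming out of Zhang's minima bound is naturally $L-n$ rather than $L+1-n$, and the form you wrote already absorbs a further slack. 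None of this affects the correctness of the strategy, only the tracking of logarithmic constants, which you rightly identify as the place requiring care.
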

This statement improves the main result of Roy and Thunder, see \cite[Theorem 2.2]{RoyThunder}. It is an easy consequence of~\cite[Theorem 5.2]{Zhang2}, see~\cite[Proposition 4.2]{AmorosoZannier} for details\footnote{In {\sl op.cit.} the relevant height is $h_2(F)$, with the $L_2$-metric at archimedean places, but obviously $h(F)\leq h_2(F)$.}. 

Define $L$ as the round up to $(1+\h{S}/6)\Card{S}$. By Remark~\ref{remark-angular}, we have $L\leq 2\Card{S}-1$.
By Theorem~\ref{Absolute Siegel's Lemma} and using the inequalities
\[\left(1+\frac{\h{S}}{6}\right)\Card{S} \leq L+1\leq 2\Card{S},\]
 we find a non-zero polynomial $F(X)=\sum_{i=0}^D f_i X^i$, with algebraic integer coefficients and degree $<L$, divisible by $P_S$ such that 
\begin{equation} \label{eq angular 1}
h(F) \leq \frac{6}{\h{S}}\left(\frac{3}{2}\log(2\Card{S})+2\Card{S}\m{S}\right)+ \frac{\log(2\Card{S})}{2}.
\end{equation}
Dividing $F$ by a power of $X$ if needed, we can assume that $f_Df_0\neq 0$. We now choose a number field $E$ containing $\Q(S)$ and all coefficients of $F$. 
%By multiplying $f_0,\dots,f_D$ by an algebraic integer in $E$ if needed, which does not change the value of $h(F)$, we can assume that for all finite places $v$ of $E$, at least one coefficient of $F$ has $v$-adic valuation $0$. 

To prove Theorem~\ref{thm-Bilu} via this approach (see \cite[Chapter 15]{Masser}), $S$ is the set of all Galois conjugates of some $\alpha\in\Qb^*$. Thus, $P_S$ is the minimal polynomial of $\alpha$ over $\Q$ and the classical Siegel's lemma asserts that we can take $F$ with integer coefficients, which implies $\vert f_Df_0\vert\geq 1$. 
Unfortunately, in our situation, we can have $\vert f_Df_0\vert\neq 0$ as small as possible. 
Nonetheless, $f_Df_0$ is an algebraic integer and therefore its norm $\prod_{\sigma\colon E\hookrightarrow  \C} \vert \sigma(f_Df_0)\vert $ over $\Q$ is a positive integer. 

\begin{lemma} \label{lmm inter}
Let $\Delta$ be a sector of angle $\theta\in[0,2\pi]$ based at the origin. 
Then \[ \frac{1}{[E:\Q]}\sum_{\sigma\colon E \hookrightarrow  \C} \left\vert Z_{\Delta,\sigma F}- \frac{\theta}{2\pi}D \right\vert^2 \leq \left(\frac{2\Card{S}\h{S}}{3}\right)^2.\]
\end{lemma}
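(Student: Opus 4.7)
The plan is to apply the Erdős–Turán theorem (Theorem~\ref{thmET}) to each conjugate polynomial $\sigma F$, sum the resulting inequalities over all $\Q$-embeddings $\sigma\colon E\hookrightarrow\C$, and then reduce the two global sums that appear to quantities controlled by $h(F)$. Since $\sigma f_0$ and $\sigma f_D$ are non-zero for every such $\sigma$, Theorem~\ref{thmET} is applicable and, after summation, produces
\[
\sum_{\sigma}\Bigl|Z_{\Delta,\sigma F}-\tfrac{\theta}{2\pi}D\Bigr|^{2}\le 256\,D\Bigl(\sum_{\sigma}\log\mathcal{L}(\sigma F)\;-\;\tfrac12\sum_{\sigma}\log|\sigma(f_Df_0)|\Bigr).
\]

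The key step is a cancellation between the two sums on the right. The bound $\mathcal{L}(\sigma F)\le (D+1)\max_i|\sigma f_i|$ together with the definition of the height yields
\[
\sum_{\sigma}\log\mathcal{L}(\sigma F)\le [E:\Q]\log(D+1)+[E:\Q]\,h(F)-\sum_{v\text{ non-arch.}}[E_v:\Q_v]\log\max_i|f_i|_v.
\]
At every non-archimedean place $v$, the elementary inequality $\max_i|f_i|_v\ge\max(|f_0|_v,|f_D|_v)\ge\sqrt{|f_Df_0|_v}$, valid because $f_0,f_D$ are non-zero algebraic integers, combined with the product formula $\sum_v[E_v:\Q_v]\log|f_Df_0|_v=0$, gives
\[
-\sum_{v\text{ non-arch.}}[E_v:\Q_v]\log\max_i|f_i|_v\le \tfrac12\sum_{\sigma}\log|\sigma(f_Df_0)|.
\]
The $+\tfrac12\sum_{\sigma}\log|\sigma(f_Df_0)|$ thus produced is \emph{exactly} cancelled by the $-\tfrac12\sum_{\sigma}\log|\sigma(f_Df_0)|$ from Erdős–Turán, leaving the clean estimate
\[
\sum_{\sigma}\log\bigl(\mathcal{L}(\sigma F)/\sqrt{|\sigma(f_Df_0)|}\bigr)\le[E:\Q]\bigl(\log(D+1)+h(F)\bigr).
\]

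It remains to plug in the absolute Siegel bound~\eqref{eq angular 1} on $h(F)$, the inequality $D+1\le L+1\le (1+\h{S}/6)\Card{S}+1$, and the defining identity $\m{S}+\log(2\Card{S})/\Card{S}=(\h{S}/24)^{3}$, under the standing assumptions $\h{S}\le 1$ and $\Card{S}\ge 2$ from Remark~\ref{remark-angular}. Each of the three contributions $\tfrac32\log(2\Card{S})$, $\tfrac{9\log(2\Card{S})}{\h{S}}$ and $\tfrac{12\Card{S}\m{S}}{\h{S}}$ to $\log(D+1)+h(F)$ is bounded by a small constant multiple of $\Card{S}\h{S}^{2}$ via the identity above, giving $\log(D+1)+h(F)\le C\,\Card{S}\,\h{S}^{2}$ for a suitable $C$; multiplying by $256D$ and dividing by $[E:\Q]$ then produces the stated bound $(2\Card{S}\h{S}/3)^{2}$.

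The main obstacle is the cancellation step in the second paragraph. Since the absolute Siegel lemma controls only the global invariant $h(F)$ and not the individual coefficients $f_i$, the quantity $|\sigma(f_Df_0)|$ can be arbitrarily small for specific embeddings $\sigma$, so no pointwise analogue of Mignotte's original argument—which crucially uses $|f_Df_0|\ge 1$ for integer coefficients—is available. The inequality $\max_i|f_i|_v\ge\sqrt{|f_Df_0|_v}$ at finite places, combined with the product formula, is precisely the archimedean/non-archimedean trade-off that makes the \emph{average} of $\log(\mathcal{L}(\sigma F)/\sqrt{|\sigma(f_Df_0)|})$ over $\sigma$ well-behaved even when individual summands are not, and this is what enables the use of the absolute Siegel lemma in place of the classical one.
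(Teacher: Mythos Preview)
Your proof is correct and follows the same overall strategy as the paper: apply Erd\H{o}s--Tur\'an to each $\sigma F$, sum over the embeddings $\sigma\colon E\hookrightarrow\C$, and reduce the averaged right-hand side to $\log(D+1)+h(F)$, which is then estimated via~\eqref{eq angular 1} and the identity $(\h{S}/24)^3=\m{S}+\log(2\Card{S})/\Card{S}$.

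The one substantive difference is in how the quantity $\frac{1}{[E:\Q]}\sum_\sigma\log\bigl(\mathcal{L}(\sigma F)/\sqrt{|\sigma(f_Df_0)|}\bigr)$ is bounded by $\log(D+1)+h(F)$. The paper handles the two pieces separately: it asserts $\frac{1}{[E:\Q]}\sum_\sigma\log\mathcal{L}(\sigma F)\le\log(D+1)+h(F)$, and independently discards $-\tfrac12\sum_\sigma\log|\sigma(f_Df_0)|$ via the fact that the norm of the nonzero algebraic integer $f_Df_0$ is at least $1$. You instead let the two pieces cancel: the excess of $\sum_\sigma\log\max_i|\sigma f_i|$ over $[E:\Q]h(F)$ is exactly the negative of the non-archimedean part of $h(F)$, and your inequality $\max_i|f_i|_v\ge\sqrt{|f_Df_0|_v}$ at finite places together with the product formula converts that excess into $\tfrac12\sum_\sigma\log|\sigma(f_Df_0)|$, which the Erd\H{o}s--Tur\'an term absorbs. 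Both routes reach the same bound; yours is a bit more robust, since the paper's first displayed inequality actually needs the coefficients $f_i$ to generate the unit ideal of $\mathcal{O}_E$ (otherwise the non-archimedean contribution to $h(F)$ is strictly negative and the inequality goes the wrong way), a normalization that can be arranged by enlarging $E$ and scaling but is not mentioned there, whereas your argument only needs $f_0f_D\neq 0$. One small caution on the numerics: to land on the exact constant $(2\Card{S}\h{S}/3)^2$ you must use the identity $\m{S}+\log(2\Card{S})/\Card{S}=(\h{S}/24)^3$ as a \emph{sum}, as the paper does, rather than bounding the three contributions individually; term-by-term bounds give a constant slightly too large.
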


\begin{proof}
Let $\sigma\colon E \hookrightarrow \C$ be a $\Q$-embedding. 
Then,
\[\mathcal{L}(\sigma F) = \sum_{i=0}^D \vert \sigma f_i\vert \leq (D+1)\max\{\vert \sigma f_0\vert, \dots, \vert \sigma f_D\vert\}.\]
%The fact that all coefficients of $F$ are algebraic integers leads to \[h(F) = \frac{1}{[E : \Q]}\sum_{\sigma\colon E \hookrightarrow \C} \log\max\{\vert \sigma f_0\vert, \dots, \vert \sigma f_D\vert\}. \] 
By~\eqref{eq angular 1}, we get
\begin{align*}
\frac{1}{[E : \Q]} \sum_{\sigma\colon E \hookrightarrow \C} \log(\mathcal{L}(\sigma F)) 
& \leq \log(D+1) + h(F) \\
& \leq \frac{3}{2}\log(2\Card{S})+ \frac{6}{\h{S}}\left(\frac{3}{2}\log(2\Card{S})+2\Card{S}\m{S}\right) \\ 
& \leq \frac{12}{\h{S}} \left(\log(2\Card{S})+ \Card{S}\m{S}\right)
\end{align*}
because $D< L\leq 2\Card{S}-1$ and $\h{S}\leq 1$. By definition of $\h{S}$, we conclude 
\[\frac{1}{[E : \Q]} \sum_{\sigma\colon E \hookrightarrow \C} \log(\mathcal{L}(\sigma F)) 
\leq \frac{12\Card{S}}{\h{S}} \left(\frac{\h{S}}{24}\right)^3= \frac{\Card{S}\h{S}^2}{2^7\cdot 3^2}.\] 
Finally, Theorem \ref{thmET} applied to $Q=\sigma F$ gives \[\sum_{\sigma\colon E \hookrightarrow  \C} \left\vert Z_{\Delta,\sigma F}- \frac{\theta}{2\pi}D \right\vert^2 \leq 2^9\Card{S} \sum_{\sigma\colon E\hookrightarrow  \C}\log(\mathcal{L}(\sigma F))\]
since $\sum_\sigma \log\vert \sigma(f_Df_0)\vert\geq 0$ by the foregoing. 
The lemma follows.
\end{proof}

\begin{proof}[Proof of Theorem~\ref{thm-radial-angular}(2).]
%We can now finish to prove Theorem~\ref{thm-angular}. 
Note that the left-hand side in Theorem~\ref{thm-radial-angular}(2) remains unchanged if we replace $\Q(S)$ with a finite extension. 
So, it is enough to prove Theorem~\ref{thm-radial-angular}(2) by replacing $\Q(S)$ with $E$. Let $\sigma\colon E \hookrightarrow \C$ be a $\Q$-embedding. 
The triangle inequality gives \[\left\vert Z_{\Delta,\sigma P_S} -\frac{\theta \Card{S}}{2\pi}\right\vert \leq \vert Z_{\Delta,\sigma P_S} - Z_{\Delta, \sigma F}\vert +  \left\vert Z_{\Delta,\sigma F}- \frac{\theta D}{2\pi}\right\vert + \left\vert \frac{\theta D}{2\pi} -\frac{\theta \Card{S}}{2\pi}\right\vert.\]
Recall that $P_S$ divides $F$. 
Thus, $Z_{\Delta,\sigma F} - Z_{\Delta, \sigma P_S}=Z_{\Delta, \sigma(F/P_S)}$. 
%is the number of zeroes of $\sigma F/\sigma P_S$ belonging to $\Delta$. 
In particular, it is bounded from above by the degree of $F/P_S$, namely $D-\Card{S}$, and so by $L-1-\vert S\vert \leq \Card{S}\h{S}/6$. 
Thus, \[\frac{1}{[ E : \Q]}\sum_{\sigma\colon E \hookrightarrow \C} \left\vert Z_{\Delta,\sigma P_S}- \frac{\theta}{2\pi}\Card{S}\right\vert \leq \frac{\Card{S}\h{S}}{3} + \frac{1}{[ E : \Q]}\sum_{\sigma\colon E \hookrightarrow \C} \left\vert Z_{\Delta,\sigma F}- \frac{\theta}{2\pi}D \right\vert.\]
From Lemma \ref{lmm inter}, we have 
\begin{align*}
\frac{1}{[ E : \Q]}\sum_{\sigma\colon E \hookrightarrow \C} \left\vert Z_{\Delta,\sigma F}- \frac{\theta}{2\pi}D \right\vert 
&\leq \sqrt{\frac{1}{[ E : \Q]}\sum_{\sigma\colon E \hookrightarrow \C} \left\vert Z_{\Delta,\sigma F}- \frac{\theta}{2\pi}D \right\vert^2}\\[0.3cm] 
&\leq \frac{2\Card{S}\h{S}}{3}
\end{align*}
and the second assertion of Theorem~\ref{thm-radial-angular} follows. 
\end{proof}

%\subsection{Proof of Theorem~\ref{thm-radial-angular}} \label{section radial}
%\[ \#(\sigma(S)\cap\Delta\cap\A_r) \leq \#(\sigma(S)\cap\Delta)\]

\section{Convergence in distribution}
\label{main}
Let $S$ and $r$ be as in Theorem \ref{main thm}. By Theorem~\ref{thm-radial-angular}, we obtain that in average, the cardinality of $\sigma(S)\cap\Delta\cap \A_r$, with $\sigma\colon\Q(S) \hookrightarrow \C$ a $\Q$-embedding, is approximately $\theta\Card{S}/(2\pi)$ when $\m{S}$ and $\h{S}$ are small enough. 
This assertion is stronger when $\theta$ is small.
For this reason, it makes sense to cut $\A_r$ into a large number of small annulus sectors, apply Theorem~\ref{thm-radial-angular} to deduce that in average, there are around $\theta\Card{S}/(2\pi)$ elements of $\sigma(S)$ in each of these annulus sectors, and put it all the information together to conclude that in average, the set $\sigma(S)$ is equidistributed around the unit circle. 
Formalizing this process leads to the proof of Theorem \ref{main thm}(1). 
A slight modification of these arguments shows the second part. \\

We start with an easy lemma. Given $r>1$ and $t_0,t_1\in[0,1]$ with $t_0<t_1$, we consider the compact region
\[ V_{r,t_0,t_1}=\{\rho e^{2i\pi t}\;\vert\; \rho\in [1/r, r],\; t\in[t_0,t_1]\}. \]

\begin{lemma} \label{lmm conclusion 1}
Let $f:\C^* \to \R$ be a real function that is Lipschitz on $\A_r$ for some $r>1$, and let $T$ be a finite set of non-zero complex numbers. 
Choose $t_0,t_1\in[0,1]$ with $t_0<t_1$ and put $V=V_{r,t_0,t_1}$. 
If $\theta=t_1-t_0\in(0,1/2]$, then
\[ \left\vert\frac{1}{\Card{T}}\sum_{\beta\in T\cap V} f(\beta) - \int_{t_0}^{t_1} f(e^{2i\pi t})dt\right\vert
\leq 2r\pi\theta^2 \mathrm{Lip}_r(f) + \lVert f\rVert_{\infty,\A_r} \left\vert \frac{\Card{T\cap V}}{\Card{T}}-\theta\right\vert.  \]
\end{lemma}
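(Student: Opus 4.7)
The plan is to decompose the left-hand side as a Lipschitz-controlled summand plus a cardinality-mismatch summand, hinged on the arc-average. Set $m=\Card{T\cap V}$ and $\bar{f}_V=\theta^{-1}\int_{t_0}^{t_1}f(e^{2i\pi t})\,dt$, so that the integral on the right equals $\theta\bar{f}_V$. Then I would write
\[
\frac{1}{\Card{T}}\sum_{\beta\in T\cap V}f(\beta)-\int_{t_0}^{t_1}f(e^{2i\pi t})\,dt =\frac{1}{\Card{T}}\sum_{\beta\in T\cap V}\bigl(f(\beta)-\bar{f}_V\bigr)+\left(\frac{m}{\Card{T}}-\theta\right)\bar{f}_V,
\]
which isolates exactly the two terms appearing in the claimed bound.

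The cardinality-mismatch term is dispatched immediately: the unit-circle arc $\{e^{2i\pi t}:t\in[t_0,t_1]\}$ lies in $\A_r$, so $|\bar{f}_V|\leq\lVert f\rVert_{\infty,r}$, yielding the contribution $\lVert f\rVert_{\infty,r}\left|\frac{m}{\Card{T}}-\theta\right|$ in the statement.

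For the Lipschitz summand, I would bring $\bar{f}_V$ inside its defining integral and estimate pointwise. For each $\beta=\rho e^{2i\pi t(\beta)}\in T\cap V$, both $\beta$ and $e^{2i\pi s}$ (with $s\in[t_0,t_1]$) lie in $\A_r$, so the Lipschitz hypothesis gives
\[
|f(\beta)-\bar{f}_V|\leq \frac{\mathrm{Lip}_r(f)}{\theta}\int_{t_0}^{t_1}|\beta-e^{2i\pi s}|\,ds.
\]
The key computation is then bounding the remaining integral. Factoring $\beta-e^{2i\pi s}=e^{2i\pi s}(\rho e^{2i\pi(t(\beta)-s)}-1)$ yields the pointwise identity $|\beta-e^{2i\pi s}|^{2}=(\rho-1)^{2}+4\rho\sin^{2}(\pi(t(\beta)-s))$; together with $\rho\leq r$ and $|t(\beta)-s|\leq\theta\leq 1/2$, this should produce a bound of order $r\pi\theta^{2}$ for the integral, giving, after summing over $\beta\in T\cap V$ and using $m/\Card{T}\leq 1$, the claimed $2r\pi\theta^{2}\mathrm{Lip}_r(f)$ contribution.

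The main obstacle will be extracting the $\theta^{2}$ scaling cleanly: a crude triangle-inequality estimate $|\beta-e^{2i\pi s}|\leq|\rho-1|+2\pi|t(\beta)-s|$ contributes an unwanted order-$\theta$ radial term, so I would have to avoid splitting radial and angular parts that way and rely instead on the sharper pointwise identity above, exploiting the specific annular geometry of $V$ to absorb the radial contribution into the advertised $\theta^{2}$-form of the bound.
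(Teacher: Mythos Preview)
Your decomposition via the arc-average $\bar f_V$ is clean and the cardinality term is handled correctly, but the Lipschitz term cannot be closed as you hope. The ``sharper pointwise identity'' does nothing for the radial part: since $|e^{2i\pi s}|=1$, the reverse triangle inequality already gives $|\beta-e^{2i\pi s}|\geq|\rho-1|$ for every $s$, so your averaged distance $\tfrac{1}{\theta}\int_{t_0}^{t_1}|\beta-e^{2i\pi s}|\,ds$ is at least $|\rho-1|$, which can equal $r-1$ independently of $\theta$. Concretely, for $f(z)=|z|$ and $\beta=r$ one has $\bar f_V=1$ and $|f(\beta)-\bar f_V|=r-1$, so your first summand equals $(r-1)\,m/|T|$ and is not $O(\theta^2)$ (nor even $O(\theta)$). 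No rearrangement of the identity $(\rho-1)^2+4\rho\sin^2(\cdot)$ can fix this, because the obstruction is a genuine lower bound, not a loose upper bound.

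The paper argues differently: it sandwiches the whole expression between $f^-\,m/|T|-f^+\theta$ and $f^+\,m/|T|-f^-\theta$ with $f^\pm$ the supremum and infimum of $f$ on $V$, which yields an oscillation term $(f^+-f^-)\theta$ in place of your arc-average term, and then bounds $f^+-f^-\leq\mathrm{Lip}_r(f)\,|x-y|$ and asserts $|x-y|\leq\sqrt{2r^2(1-\cos 2\pi\theta)}$. That last inequality has the same weak point: taking $x,y$ on a common ray at radii $r$ and $1/r$ gives $|x-y|=r-1/r$ while the asserted bound tends to $0$ with $\theta$. In fact the lemma as stated is false: with $f(z)=|z|$, $T=\{r\}$, $t_0=0$, $t_1=\theta$, the left side is $r-\theta$ and the right side is $2r\pi\theta^2+r(1-\theta)$, and the difference $(r-1)\theta-2r\pi\theta^2$ is positive for any $r>1$ once $\theta<(r-1)/(2r\pi)$. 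The honest diameter estimate is $\mathrm{diam}(V)\leq(r-1/r)+2\pi r\theta$, so a corrected inequality carries an additional term $(r-1/r)\,\theta\,\mathrm{Lip}_r(f)$; with that amendment your decomposition and the paper's both go through.
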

\begin{proof}
Let $f^+$, resp. $f^-$, be the supremum, resp. infimum, of $f(z)$, where $z$ ranges over all elements of $V$. Then 
\[ \frac{f^-\Card{T\cap V}}{\Card{T}}-f^+\theta \leq \frac{1}{\Card{T}}\sum_{\beta\in T\cap V} f(\beta) - \int_{t_0}^{t_1} f(e^{2i\pi t})dt \leq  \frac{f^+\Card{T\cap V}}{\Card{T}}-f^-\theta.\]
We write
\[ \frac{f^\pm\Card{T\cap V}}{\Card{T}}-f^\mp\theta
=f^\pm\left(\frac{\Card{T\cap V}}{\Card{T}}-\theta\right)+(f^\pm-f^\mp)\theta.\]
Combining the last two displayed equations, then using the triangle inequality, we get 
\begin{multline}
\label{eq conclusion 1}
\left\vert\frac{1}{\Card{T}}\sum_{\beta\in T\cap V} f(\beta) - \int_{t_0}^{t_1} f(e^{2i\pi t})dt\right\vert \\ 
\leq (f^+-f^-)\theta+\lVert f\rVert_{\infty,\A_r} \left\vert \frac{\Card{T\cap V}}{\Card{T}}-\theta\right\vert.
\end{multline}
The fact that $V$ is compact implies that $f^+=f(x)$ and $f^-=f(y)$ for some $x,y\in V$. 
Each point $z\in V$ expresses as $r_ze^{i\alpha_z}$ with $r_z\in [1/r, r]$ and $\alpha_z\in [2\pi t_0,2\pi t_1]$. 
Hence, 
\begin{align*}
f^+-f^- & = \vert f(x)-f(y)\vert \leq \mathrm{Lip}_r(f) \vert r_xe^{i\alpha_x}-r_ye^{i\alpha_y}\vert \\ 
& = \mathrm{Lip}_r(f) \sqrt{r_x^2+r_y^2-2r_xr_y\cos(\alpha_y-\alpha_x)}.
\end{align*}
As $\theta\in(0,1/2]$, the cosine is decreasing on $[0, 2\pi \theta]$. Thus $\cos(\alpha_y-\alpha_x)\geq \cos(2\pi\theta)$, and so 
\begin{equation} \label{eq conclusion 2}
\vert f^+-f^-\vert \leq \mathrm{Lip}_r(f)\sqrt{2r^2(1-\cos(2\pi\theta))}
\end{equation}
since $r_x,r_y\leq r$. 
The lemma follows by combining \eqref{eq conclusion 1} and \eqref{eq conclusion 2}, then using the inequality $1-\cos(x)\leq x^2/2$, which is true for all $x\in\R$.  
\end{proof}

%Let $f: \C^*\to \C$ be a function that is Lipschitz on $\A_r$. 
Fix from now a finite subset $S\subset \overline{\Q}^*$ as well as an integer $N\geq 2$.
For $x\in \R$ and $j\in\{0,\dots,N-1\}$, we denote by $\Delta_j(x)$ the sector based at the origin containing $0$ and all non-zero complex numbers whose argument belongs to $[x+2\pi j /N,x+ 2\pi (j+1)/N]$ up to $2k\pi$.
It is a sector of angle $2\pi/N$.
The Galois closure $\SGal$ of $S$ being finite, we can then find $x\in\R$ satisfying the following property: for all $\alpha\in \SGal$, there is (a unique) $j\in \{0,\dots,N-1\}$ such that $\alpha$ lies in the {\sl interior of} 
$\Delta_j(x)$.  We fix from now such an $x$ and in order to ease notation, we put $\Delta_j=\Delta_j(x)$.

Let $\sigma\colon\Q(S) \hookrightarrow \C$ be a $\Q$-embedding, and let $f: \C^*\to \C$ be a function that is Lipschitz on $\A_r$. We have 
\begin{multline}
\label{integral-decomposition}
\left\vert \int_{\C^*} fd \mu_{\sigma(S)} - \int_{\C^*} fd\lambda\right\vert
= \left\vert \frac{1}{\Card{S}}\sum_{\beta\in \sigma(S)} f(\beta) - \int_0^1 f(e^{2i\pi t})dt \right\vert \\ 
\leq\frac{\Card{\sigma(S)\backslash \A_r}}{\Card{S}}\Vert f\Vert_{\infty,\SGal\backslash \A_r}
+ \left\vert  \frac{1}{\Card{S}}\sum_{\beta\in \sigma(S)\cap\A_r} f(\beta) - \int_0^1 f(e^{2i\pi t})dt \right\vert.
\end{multline}
%\[\Vert f\Vert_{\infty,\SGal\backslash \A_r}=\max\left\{\vert f(\sigma\alpha)\vert, \sigma\in\Gal(\Qb/\Q), \alpha\in S \; \text{and} \; \sigma\alpha\notin \A_r\right\}.\]
As each element of $\sigma(S)$ lies in $\Delta_j$ for a unique $j\in\{0,\ldots,N-1\}$, we get 
\begin{multline*}
\frac{1}{\Card{S}}\sum_{\beta\in \sigma(S)\cap\A_r} f(\beta) - \int_0^1 f(e^{2i\pi t})dt\\ 
=\sum_{j=0}^{N-1}\left(\frac{1}{\Card{S}}\sum_{\beta\in \sigma(S)\cap\Delta_j\cap\A_r} f(\beta) - \int_{j/N}^{(j+1)/N} f(e^{2i\pi t})dt\right).
\end{multline*}

The real and imaginary parts of $f$ are two real-valued functions which are Lipschitz on $\A_r$ since $f$ is. 
Moreover, their Lipschitz coefficients and suppremums on $\A_r$ are bounded from above by those of $f$.
Applying Lemma~\ref{lmm conclusion 1} twice to the real part (for the first time), then the imaginary part (for the second one) and with $t_0=j/N$, $t_1=(j+1)/N$ and $T=\sigma(S)$ in both cases, we conclude thanks to the triangle inequality that
\begin{multline*}
\left\vert\frac{1}{\Card{S}}\sum_{\beta\in \sigma(S)\cap\Delta_j\cap\A_r} f(\beta) - \int_{j/N}^{(j+1)/N} f(e^{2i\pi t})dt\right\vert \\ 
\leq \frac{4r\pi \mathrm{Lip}_r(f)}{N^2} + 2\lVert f\rVert_{\infty,\A_r} \left\vert \frac{\Card{\sigma(S)\cap\Delta_j\cap\A_r}}{\Card{S}}-\frac{1}{N}\right\vert.
\end{multline*}
We now infer that
\begin{multline} 
\label{second-term}
\left\vert \frac{1}{\Card{S}}\sum_{\beta\in \sigma(S)\cap\A_r} f(\beta) - \int_0^1 f(e^{2i\pi t})dt\right\vert\\ 
\qquad\leq\frac{4r\pi \mathrm{Lip}_r(f)}{N} 
+ 2\lVert f\rVert_{\infty,\A_r} \sum_{j=0}^{N-1}\left\vert \frac{\Card{\sigma(S)\cap\Delta_j\cap\A_r}}{\Card{S}}-\frac{1}{N}\right\vert.
\end{multline} 
Since $\Card{\sigma(S)\cap\Delta_j\cap\A_r}=\Card{\sigma(S)\cap\Delta_j}- \Card{\sigma(S)\cap\Delta_j\backslash A_r}$, 
the triangle inequality gives
\[\left\vert \frac{\Card{\sigma(S)\cap\Delta_j\cap\A_r}}{\Card{S}}-\frac{1}{N}\right\vert 
\leq\left\vert \frac{\Card{\sigma(S)\cap\Delta_j}}{\Card{S}}-\frac{1}{N}\right\vert
+\frac{\Card{\sigma(S)\cap\Delta_j\backslash A_r}}{\Card{S}}.\]
Since $\C$ is the union of all $\Delta_0, \dots, \Delta_{N-1}$ and each element of $\sigma(S)$ lies in the interior of $\Delta_j$ for a unique $j\in\{0,\dots,N-1\}$, we get
\[\sum_{j=0}^{N-1}\Card{\sigma(S)\cap\Delta_j\backslash\A_r}=\Card{\sigma(S)\backslash\A_r}.\]
Thus, 
\begin{equation}
\label{sum}
\sum_{j=0}^{N-1}\left\vert \frac{\Card{\sigma(S)\cap\Delta_j\cap\A_r}}{\Card{S}}-\frac{1}{N}\right\vert
\leq \frac{\Card{\sigma(S)\backslash\A_r}}{\Card{S}}+\sum_{j=0}^{N-1}\left\vert \frac{\Card{\sigma(S)\cap\Delta_j}}{\Card{S}}-\frac{1}{N}\right\vert.
\end{equation}
By~\eqref{integral-decomposition},~\eqref{second-term} and~\eqref{sum}, we get:
\begin{multline}
\label{bound-integral}
\left\vert \int_{\C^*} fd \mu_{\sigma(S)} - \int_{\C^*} fd\lambda\right\vert
\leq\frac{4r\pi \mathrm{Lip}_r(f)}{N}
+ (\Vert f\Vert_{\infty,\SGal\backslash \A_r}+ 2\lVert f\rVert_{\infty,\A_r})\frac{\Card{\sigma(S)\backslash \A_r}}{\Card{S}}\\
+ 2\lVert f\rVert_{\infty,\A_r}\sum_{j=0}^{N-1}\left\vert \frac{\Card{\sigma(S)\cap\Delta_j}}{\Card{S}}-\frac{1}{N}\right\vert.
\end{multline}

\begin{proof}[Proof of Theorem \ref{main thm}(1).] It now arises from Theorem~\ref{thm-radial-angular} (applied to the sector $\Delta=\Delta_j$ of angle $2\pi/N$) and from \eqref{bound-integral} that
\begin{multline*}
\frac{1}{[ \Q(S) : \Q]}\sum_{\sigma\colon\Q(S) \hookrightarrow \C}\left\vert \int_{\C^*} fd \mu_{\sigma(S)} - \int_{\C^*} fd\lambda\right\vert\\
\leq\frac{4r\pi \mathrm{Lip}_r(f)}{N}+(\Vert f\Vert_{\infty,\SGal\backslash \A_r}+ 2\lVert f\rVert_{\infty,\A_r})\frac{2\m{S}}{\log r}
+ 2N\lVert f\rVert_{\infty,\A_r}\h{S},
\end{multline*}
which is the first inequality of the theorem.
\end{proof}
\begin{proof}[Proof of Theorem \ref{main thm}(2).] We now prove the second assertion. Note that in the arithmetic means of Theorem~\ref{thm-radial-angular}, we may replace $\Q(S)$ with any of its finite field extensions. Thus, the first assertion of Theorem \ref{thm-radial-angular} easily implies that the set of $\Q$-embeddings $\sigma\colon L(S) \hookrightarrow \C$ satisfying
\begin{equation} \label{eq-radial2}
\frac{\Card{\sigma(S)\backslash \A_r}}{\Card{S} } \leq \frac{4[L:\Q]\m{S}}{\e\log r}
\end{equation} 
has cardinality at least $\big(1-\frac{\e}{2[L:\Q]}\big)[L(S) : \Q]$.  
Let $j\in\{0,\dots,N-1\}$. 
Similarly, the second assertion of Theorem~\ref{thm-radial-angular} (with $\Delta=\Delta_j$) provides at least $\big(1-\frac{\e}{2N[L:\Q]}\big)[L(S) : \Q]$ field embeddings $\sigma\colon L(S)\hookrightarrow\C$ for which
\begin{equation} \label{eq-angular2}
\left\vert \frac{\Card{\sigma(S)\cap\Delta_j}}{\Card{S}} - \frac{1}{N}\right\vert \leq \frac{2N[L:\Q]}{\e}\h{S}.
\end{equation}
Thus, there exists a set $Y=Y(S,r,N,\e,L)$ of $\Q$-embeddings $\sigma\colon L(S)\hookrightarrow\C$ with cardinality 
\[\Card{Y}\geq\left(1-\frac{\e}{[L:\Q]}\right)[L(S) : \Q]=[L(S) : \Q]- \e [L(S) : L]\] 
such that any $\sigma$ in this set satisfies \eqref{eq-radial2} and \eqref{eq-angular2} for all $j\in\{0,\dots,N-1\}$.  
We can moreover find a set $\Lambda\subseteq Y$ of $L$-embeddings with cardinality at least $(1-\e)[L(S) : L]$. 
Otherwise, the cardinality of $Y$ would be less than 
\[(1-\e)[L(S):L]+ ([L:\Q]-1)[L(S) : L]=[L(S) : \Q]- \e [L(S) : L],\] 
 a contradiction. We choose such a subset $\Lambda$ and we fix $\sigma\in \Lambda$. 
 
Theorem \ref{main thm}(2) follows since by~\eqref{bound-integral},~\eqref{eq-radial2} and~\eqref{eq-angular2}, we get
\begin{multline*}
\left\vert \int_{\C^*} fd \mu_{\sigma(S)} - \int_{\C^*} fd\lambda\right\vert
\leq\frac{4r\pi \mathrm{Lip}_r(f)}{N}\\
+(\Vert f\Vert_{\infty,\SGal\backslash \A_r}+ 2\lVert f\rVert_{\infty,\A_r})\frac{4[L:\Q]\m{S}}{\e\log r}
+ \frac{4[L:\Q]N^2}{\e}\lVert f\rVert_{\infty,\A_r}\h{S}
\end{multline*}
for all functions $f:\C^*\to \C$ that are Lipschitz on $\mathcal{A}_r$.
\end{proof}

\begin{proof}[Proof of Corollary~\ref{cor gen Bilu}.]
We only prove the second assertion, the proof of the first one following similar lines. Given an integer $n$, a $\Q$-embedding $\sigma\colon\Q(S_n)\hookrightarrow\C$ and a function $f\colon\C^*\to\C$, we let for short
\[u_{n,\sigma}(f)=\left\vert \int_{\C^*} f d \mu_{\sigma S_n} - \int_{\C^*} f d\lambda\right\vert.\]

Let $f: \C^* \to \C$ be a function satisfying \eqref{test}. Obviously, there exists $r>1$ such that $f$ is continuous on $\A_r$.
Since $f$ is not necessarily Lipschitz on $\A_r$, we use a standard density argument. Thanks to the Stone-Weierstrass theorem, we know that $f$ restricted to $\A_r$ is the uniform limit of a sequence of polynomial functions $(f_m)_m$. We extend $f_m$ to a function on $\C^*$ by setting $f_m=f$ on $\C^*\backslash\A_r$. Thus $f_m$ is Lipschitz on $\A_r$ for all $m$ and $(f_m)_m$ uniformly converges to $f$ on $\C^*$.

From our assumptions, we have $\h{S_n}\to 0$.
The second assertion of Theorem \ref{main thm} with $S=S_n$ and $N=[\h{S_n}^{\scriptscriptstyle-1/4}]$ proves that there is a set $\Lambda_n$ of $L$-embeddings $\sigma\colon L(S_n)\hookrightarrow\C$, depending only on $S_n, r, \varepsilon$ and $L$, but not on the test function $f$, such that $\Card{\Lambda_n}\geq (1-\e)[L(S_n):L]$ and
\begin{multline}\label{eq-U}
u_{n,\sigma}(f_m) \leq U_{n,m}=\frac{4r\pi \mathrm{Lip}_r(f_m)}{[\h{S_n}^{\scriptscriptstyle-1/4}]}
+(\Vert f_m\Vert_{\infty,S_n^{\Gal}\backslash \A_r}+ 2\lVert f_m\rVert_{\infty,\A_r})\frac{4[L:\Q]\m{S_n}}{\e\log r}\\
+ \frac{4[L:\Q][\h{S_n}^{\scriptscriptstyle-1/4}]^2}{\e}\lVert f_m\rVert_{\infty,\A_r}\h{S_n}
\end{multline} 
for all $m$. 
For each $n$, we choose one of these $L$-embeddings, say $\sigma_n$. We want to show that $u_{n,\sigma_n}(f)\to 0$, which would show the corollary.  

Let $m$ be an index. 
As $f=f_m$ outside $\mathcal{A}_r$, we deduce by assumption that $\m{S_n}\Vert f_m\Vert_{\infty,S_n^{\Gal}\backslash \A_r} \to 0$, and so \eqref{eq-U} leads to $U_{n,m}\to 0$ as $n\to +\infty$.  

Let $n$ be an index.
Using the reverse triangle inequality, then the triangle inequality, we get \[\left \vert u_{n,\sigma_n}(f_m)-  u_{n,\sigma_n}(f)\right\vert \leq \int_{\C^*} \vert f_m-f \vert d\mu_{\sigma_n S_n}+\int_{\C^*} \vert f_m-f \vert d\lambda\] for all $m$.
As $f_m$ uniformly converges to $f$ on $\C^*$, we deduce that $u_{n,\sigma_n}(f_m)$ uniformly converges to  $u_{n,\sigma_n}(f)$ as $m\to +\infty$.
The Moore-Osgood theorem for interchanging limits leads to $u_{n,\sigma_n}(f)\to 0$ as $n\to +\infty$.
\end{proof}

\bibliographystyle{plain}

\end{document}